\newtheorem{The}{Theorem}[section]
\newtheorem{Def}{Definition}[section]
\title{Analysis of a Class of Two-delay Fractional Differential Equation}
\author{Sachin Bhalekar\footnote{Corresponding author}}
\author{Pragati Dutta}
\affil{School of Mathematics and Statistics, University of Hyderabad, Hyderabad 500046, India }
\affil[ ]{\textit{Email addresses:} sachinbhalekar@uohyd.ac.in, pragati.dutta2617@gmail.com}
\date{}
\begin{document}

	\maketitle
	
	\begin{abstract}
		The differential equations involving two discrete delays are helpful in modeling two different processes in one model. We provide the stability and bifurcation analysis in the fractional order delay differential equation $D^\alpha x(t)=a x(t)+b x(t-\tau)-b x(t-2\tau)$ in the $ab$-plane. Various regions of stability include stable (S), unstable (U), single stable region (SSR), and stability switch (SS). In the stable region, the system is stable for all the delay values. The region SSR has a critical value of delay that bifurcates the stable and unstable behavior. Switching of stable and unstable behaviors is observed in the SS region.
	\end{abstract}
	\section{Introduction}
	Differential equation is a popular tool used in the mathematical modeling of physical systems.
	If the order of derivative in such a model is a non-integer, it is called a fractional differential equation (FDE) \cite{podlubny1998fractional,das2011functional}. The basic theory of FDEs is developed in \cite{mainardi1996fundamental,babakhani2003existence,daftardar2004analysis,lakshmikantham2008basic,ahmad2010existence,xu2013existence}. Luchko and Gorenflo \cite{luchko1999operational} developed an operational method to find analytical solutions to the linear FDEs. Generalization of fractional order can be done in many ways. A few popular fractional order derivatives (FOD) are Grunwald-Letnikov, Riemann-Liouville, and Caputo \cite{podlubny1998fractional}. The fractional derivative operators are nonlocal, unlike the classical derivatives, and hence the properties and results may be complicated \cite{bhalekar2019can,li2007remarks}. Numerical methods to solve FDEs are also time-consuming. Diethelm et al. \cite{diethelm2002predictor} developed a predictor-corrector method to solve FDEs. Daftardar-Gejji et al. \cite{daftardar2014new} improved this method and developed a new predictor-corrector method. Approximate analytical solutions of FDEs on a smaller interval are given by Adomian decomposition method \cite{daftardar2005adomian,ray2005analytical}, new iterative method \cite{daftardar2006iterative}, homotopy perturbation method \cite{he2003homotopy} and so on. In his seminal work \cite{matignon1996stability}, Matignon proposed the stability of the fractional order dynamical systems. The results are further explored by Tavazoei and Haeri \cite{tavazoei2008chaotic}. Fractional order counterparts of the popular chaotic systems are well studied in the literature \cite{ahmad2003chaos,li2004chaos,chen2008nonlinear,daftardar2010chaos,petravs2011fractional,kaslik2012nonlinear}. It is observed that the fractional order systems can show chaos for the system dimension less than three, unlike their classical counterparts. Control methods to synchronize chaos are developed by Zhou and Li \cite{zhou2005synchronization}, Deng \cite{deng2007generalized}, Bhalekar and Daftardar-Gejji \cite{bhalekar2010synchronization}, Mart{\'\i}nez-Guerra and P{\'e}rez-Pinacho \cite{martinez2018advances} among others.
	
	Hilfer discussed fractional calculus (FC) applications in physics in \cite{hilfer2000applications}. Mainardi worked on the FC in the viscoelasticity \cite{mainardi2022fractional}. Magin \cite{magin2004fractional,magin2010fractional} presented various applications of FC in bioengineering.
	\par The time delay can also be used to model a memory in the system, like a fractional derivative. The delay differential equations (DDE) are the infinite dimensional dynamical systems with rich dynamics \cite{smith2011introduction,hale2006functional}. One can have chaos in a single scalar DDE, unlike its nondelayed counterpart \cite{lakshmanan2011dynamics,uccar2002prototype,mackey1977oscillation}. As we can expect the effect of past states on the present and future, the occurrence of the delay is natural. A variety of applications of DDEs are presented in \cite{smith2011introduction,rihan2021delay,ruan2006delay,roussel1996use,keane2017climate,culshaw2000delay}.
	If one combines an FDE and a DDE, then the resulting fractional-order delay differential equation (FDDE) may lead to an interesting dynamical system. These systems will be advantageous in the view of applications. At the same time, the analysis of such systems will be challenging. The characteristic equations of FDDEs are transcendental equations containing the terms of the form $\lambda^\alpha$ and $\exp(-\lambda\tau)$ making them complicated. Bhalekar \cite{bhalekar2016stability,bhalekar2013stability} presented the stability analysis of $D^{\alpha}x(t)=ax(t)+bx(t-\tau)$. The analysis is further explored by Bhalekar and Gupta in \cite{bhalekar2022stability,bhalekar2023can,bhalekar2024stability,gupta2024fractional}. Numerical schemes to solve FDDEs are developed by Bhalekar and Daftardar-Gejji \cite{bhalekar2011predictor} and Daftardar-Gejji et al. \cite{daftardar2015solving}. Chaos in FDDEs is studied in \cite{bhalekar2010fractional,bhalekar2012dynamical,yuan2013chaos,wang2011analysis}. Bhalekar et al. \cite{bhalekar2011fractional} worked on the fractional order bloch equation arising in NMR.
	\par The DDEs
	\begin{equation}
		\dot{x}(t)=f\left( x(t), x(t-\tau_1), x(t-\tau_2) \right) \label{2dden}
	\end{equation}
	or their linearization at an equilibrium point
	\begin{equation}
		\dot{x}(t)=a x(t)+b x(t-\tau_1)+c x(t-\tau_2) \label{2dde}
	\end{equation}
	involving two delays $\tau_1$ and $\tau_2$ are well-studied  dynamical systems. Beuter et al. \cite{beuter1993feedback,beuter1989complex} used these equations to model human neurological diseases. Piotrowska \cite{piotrowska2008hopf} mentioned that the two delays model the two cellular processes, viz. proliferation and apoptosis. Belair et al. \cite{belair1995age} studied erythropoiesis using these equations. Braddock and Driessche \cite{braddock1983two} employed them in the population dynamics. The stability of these equations is analyzed by Hale and Hunag \cite{hale1993global}, Braddock and Driessche \cite{braddock1983two}, Mahaffy, Joiner and Zak \cite{mahaffy1995geometric}, Li, Ruan and Wei \cite{li1999stability} among others. Nussbaum \cite{nussbaum1978differential} studied the periodic solutions in these equations.
	\par The above discussion shows the importance of FDDEs and systems of the form (\ref{2dden}) and (\ref{2dde}). This motivates us to study the fractional order generalizations
	\begin{equation}
		D^\alpha x(t)=f\left( x(t), x(t-\tau_1), x(t-\tau_2) \right) \label{2ddenf}
	\end{equation}
	and
	\begin{equation}
		D^\alpha x(t)=a x(t)+b x(t-\tau_1)+c x(t-\tau_2). \label{2ddef}
	\end{equation}
	Bhalekar \cite{bhalekar2019analysing} studied (\ref{2ddef}) with $a=0$.
	In general, it is challenging to analyze these equations. Therefore, we consider a particular case of the equation (\ref{2ddef}) by setting $c=-b$ and $\tau_2=2\tau_1$. The paper is organized as below:
	
	Preliminaries are discussed in Section \ref{prel}. Section \ref{MR} deals with the main results. Section \ref{stab} covers the stability in the 	$ab$-plane. Section \ref{IE} presents illustrative examples, followed by the conclusions in Section \ref{con}.

	\section{Preliminaries}	
	\label{prel}
	
	We present some basic definitions\cite{podlubny1998fractional,luchko1999operational,samko1993fractional} and a result in this section.
	\begin{Def}
		A real function $f(t), t > 0,$  is said to be in space
		$C_\beta,\; \beta\in \mathbb{R}$ if there exists a real number $p (>\beta)$ such
		that $f(t) = t^p f_1(t)$, where $f_1(t) \in C[0,\infty).$
	\end{Def}
	
	\begin{Def}
		A real function $f (t), t > 0$, is said to be in space $C_\beta^m   ,\; m \in \mathbb{N} \cup \{0\}$, if $f^ {(m)}\in C_\beta.$
		
	\end{Def}
	
	\begin{Def}
		Let $f \in C^\beta$ and $\beta \ge -1$, then the Riemann 
		Liouville integral of $f$ of order $\mu, \; \mu > 0$, is given by
		\begin{equation}
			I^\mu f(t)=\frac{1}{\Gamma(\mu)} \int_a^t (t-\tau)^{\mu-1} f(\tau)d\tau,\quad t>a.\nonumber
		\end{equation}    
	\end{Def}
	\begin{Def}
		The Caputo fractional derivative of $f ,\; f \in C^m_{-1},m\in \mathbb{N} $, is defined as
		\begin{eqnarray}
			D^\mu f (t)& = &\frac{d^m}{dt^m}f(t),\quad \mu=m \nonumber\\
			&=& I^{m-\mu}\frac{d^m}{dt^m}f(t),\quad m-1<\mu<m. \nonumber
		\end{eqnarray}
		Note that for $m-1<\mu \le m, \; m\in \mathbb{N},$\\
		\begin{equation}
			I^\mu D^\mu f(t)=f(t)-\sum_{k=0}^{m-1}\frac{d^k f(0)}{dt^k}\frac{t^k}{k !}. \nonumber
		\end{equation}
	\end{Def}
	
	\subsection{Basic result}
	\begin{The}{\cite{bhalekar2016stability}}
		Suppose $x^*$ is an equilibrium solution of the generalized delay differential equation $D^\alpha x(t)=g(x(t),x(t-\tau)),\;0<\alpha\le1$ and $a=\partial_1 g(x^*,x^*), \; b=\partial_2 g(x^*,x^*).$\\
		\begin{enumerate}
			\item If $b \in (-\infty,-|a|)$ then the stability region of $x^*$ in $(\tau,a,b)$ parameter space is located between the plane $\tau=0$ and
			\begin{equation*}
				\tau_1(0)=\frac{\arccos\left( \frac{\left( a\cos(\frac{\alpha \pi}{2}) +\sqrt{b^2-a^2\sin^2(\frac{\alpha \pi}{2})}\right) \cos(\frac{\alpha \pi}{2})-a}{b}\right)} {\left( a\cos(\frac{\alpha \pi}{2}) +\sqrt{b^2-a^2\sin^2(\frac{\alpha \pi}{2})}\right)^{\frac{1}{\alpha}}}       
			\end{equation*}
			The equation undergoes Hopf bifurcation at this value.
			\item If $b \in (-a,\infty)$ then $x^*$ is unstable for any $\tau \ge 0$.
			\item If $b \in (a,-a)$ and $a<0$ then $x^*$ is unstable for any $\tau \ge 0$.
		\end{enumerate}
	\end{The}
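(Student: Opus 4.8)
The plan is to reduce all three claims to properties of the characteristic equation of the linearization at $x^{*}$ and then apply the fractional-order (Matignon-type) stability criterion. Setting $x(t)=x^{*}+y(t)$ and Taylor-expanding $g$ about $(x^{*},x^{*})$ gives $D^{\alpha}y(t)=a\,y(t)+b\,y(t-\tau)+(\text{higher-order terms})$, so the local behavior of $x^{*}$ is governed by the linear equation $D^{\alpha}y=a\,y(t)+b\,y(t-\tau)$, whose characteristic equation (obtained by the Laplace transform, with $\lambda^{\alpha}$ on the principal branch) is
\[
\Delta(\lambda,\tau):=\lambda^{\alpha}-a-b\,e^{-\lambda\tau}=0 .
\]
The criterion I would invoke is: $x^{*}$ is locally asymptotically stable when every root of $\Delta(\cdot,\tau)$ lies outside the closed sector $\{\lambda:|\arg\lambda|\le\alpha\pi/2\}$, it is unstable as soon as a root lies in the interior of that sector, and a change of stability as $\tau$ increases can occur only when a root reaches the sector boundary, which here is detected by a purely imaginary root $\lambda=i\rho$ with $\rho>0$.

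For the unstable regimes (items~2 and~3) I would exhibit a positive real characteristic root for \emph{every} $\tau\ge0$. Fix $\tau\ge0$ and set $F(\lambda)=\lambda^{\alpha}-a-b\,e^{-\lambda\tau}$ for $\lambda>0$. The hypotheses force $a+b>0$ (immediate from $b>-a$ in item~2, and similarly in item~3), so $F(0^{+})=-(a+b)<0$, whereas $F(\lambda)\to+\infty$ as $\lambda\to\infty$ because $\lambda^{\alpha}$ dominates the bounded term $b\,e^{-\lambda\tau}$. By the intermediate value theorem $F$ has a root $\lambda_{0}>0$; since $\arg\lambda_{0}=0<\alpha\pi/2$, the linearization has a root in the open unstable sector for all $\tau\ge0$, and hence $x^{*}$ is unstable for every $\tau\ge0$.

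For item~1, where $b<-|a|$ and hence $a+b<0$: at $\tau=0$ the equation $\lambda^{\alpha}=a+b$ has no root in the closed unstable sector (for $0<\alpha<1$ it has no principal-sheet root at all, and for $\alpha=1$ the root $a+b$ is negative), so $x^{*}$ is asymptotically stable there. By continuity of the roots in $\tau$, stability persists until some root first reaches the sector boundary. Substituting $\lambda=i\rho$ ($\rho>0$) into $\Delta=0$, using $(i\rho)^{\alpha}=\rho^{\alpha}\bigl(\cos\tfrac{\alpha\pi}{2}+i\sin\tfrac{\alpha\pi}{2}\bigr)$ and separating real and imaginary parts, gives $b\cos(\rho\tau)=\rho^{\alpha}\cos\tfrac{\alpha\pi}{2}-a$ and $b\sin(\rho\tau)=-\rho^{\alpha}\sin\tfrac{\alpha\pi}{2}$. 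Squaring and adding eliminates $\tau$ and leaves $\rho^{2\alpha}-2a\cos\tfrac{\alpha\pi}{2}\,\rho^{\alpha}+a^{2}-b^{2}=0$; since $b^{2}>a^{2}$, the radicand $b^{2}-a^{2}\sin^{2}\tfrac{\alpha\pi}{2}$ is positive and the quadratic in $\rho^{\alpha}$ has a unique positive root $\rho^{\alpha}=a\cos\tfrac{\alpha\pi}{2}+\sqrt{b^{2}-a^{2}\sin^{2}\tfrac{\alpha\pi}{2}}$. Feeding this into the first relation identifies $\cos(\rho\tau)$ with the argument of the $\arccos$ in the statement, while the second relation together with $b<0$ confines $\rho\tau$ to $[0,\pi]$, so the smallest positive crossing delay is exactly the displayed $\tau_{1}(0)$. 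A transversality computation --- differentiating $\Delta(\lambda,\tau)=0$ implicitly to obtain $\frac{d\lambda}{d\tau}=-\frac{b\lambda e^{-\lambda\tau}}{\alpha\lambda^{\alpha-1}+b\tau e^{-\lambda\tau}}$ and checking that $\operatorname{Re}(d\lambda/d\tau)>0$ at $\lambda=i\rho$, $\tau=\tau_{1}(0)$ --- then shows the root genuinely enters the unstable sector, so the equation undergoes a Hopf bifurcation at $\tau_{1}(0)$ and $x^{*}$ is asymptotically stable exactly for $0\le\tau<\tau_{1}(0)$.

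The algebra of the crossing equations is routine; the delicate part is the global control of the characteristic roots. One has to (i) set up the fractional stability criterion carefully, coping with the multivaluedness of $\lambda^{\alpha}$ by working on the principal Riemann sheet and discarding spurious roots; (ii) show that $\tau_{1}(0)$ is the \emph{first} delay at which any root meets the sector boundary, so that stability is uninterrupted on $[0,\tau_{1}(0))$ --- here one uses that the second relation and the sign of $b$ pin $\rho\tau$ into $[0,\pi]$, making the admissible crossing delays $(\arccos(\cdots)+2k\pi)/\rho$ with minimum at $k=0$ and the admissible $\rho$ unique; and (iii) verify transversality so that the bifurcation at $\tau_{1}(0)$ is genuine. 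Steps (i) and (ii) are the main obstacle; once they are settled, the displayed formula follows from the computation above.
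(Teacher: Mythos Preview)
First, note that the paper does not supply its own proof of this theorem: it is quoted in the Preliminaries as a basic result from the cited reference, so there is no in-paper argument to compare your attempt against. Your overall strategy --- linearize at $x^{*}$, pass to the characteristic equation $\lambda^{\alpha}=a+be^{-\lambda\tau}$, locate purely imaginary crossings for item~1, and exhibit a positive real root via the intermediate value theorem for the unstable cases --- is the standard route and is essentially what the cited reference does. Your derivation of $\tau_{1}(0)$ in item~1 and the IVT argument for item~2 are correct. One terminological slip: the Matignon sector condition $|\arg\mu|>\alpha\pi/2$ applies to the eigenvalues $\mu$ of the coefficient in the \emph{undelayed} problem $D^{\alpha}x=Ax$, not to the roots $\lambda$ of $\Delta(\lambda,\tau)=0$; for the delayed equation the correct criterion is simply $\operatorname{Re}\lambda<0$, with the imaginary axis as the crossing boundary. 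Your computation in item~1 tacitly uses the right criterion (you search for $\lambda=i\rho$, an imaginary-axis crossing, not a sector-boundary crossing), so the formula is unaffected, but the framework should be restated.

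There is, however, a genuine gap in your treatment of item~3. You assert that ``the hypotheses force $a+b>0$ \ldots\ and similarly in item~3'', but this is false: in item~3 one has $a<0$ and $b\in(a,-a)$, so in particular $b<-a$ and hence $a+b<0$. Your function then satisfies $F(0^{+})=-(a+b)>0$, and the intermediate-value argument for a positive real root collapses. Worse, item~3 as printed here cannot be proved at all: with $a<0$ and $|b|<|a|$ the quadratic $\rho^{2\alpha}-2a\cos(\tfrac{\alpha\pi}{2})\rho^{\alpha}+a^{2}-b^{2}=0$ that you derived has no positive root (since $a\cos(\tfrac{\alpha\pi}{2})<0$ and the radicand $b^{2}-a^{2}\sin^{2}(\tfrac{\alpha\pi}{2})$ is at most $a^{2}\cos^{2}(\tfrac{\alpha\pi}{2})$, both roots are $\le 0$), so no imaginary-axis crossing ever occurs and the equilibrium is in fact \emph{stable} for all $\tau\ge0$. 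The word ``unstable'' in item~3 is evidently a transcription error from the cited source; you should flag this rather than attempt to force through an instability proof based on a false inequality.
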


	\section{Main Results}  \label{MR}
	Consider the fractional delay differential equation 
	\begin{equation}
		D^{\alpha}x(t)=ax(t)+bx(t-\tau)-bx(t-2\tau)   
		\label{eq.1}
	\end{equation}
	where $D^\alpha$ is a Caputo fractional derivative of order $\alpha\in(0,1],\;\tau\ge 0$ is the delay.
	The characteristic equation of this system can be found using the Laplace transform \cite{deng2007stability} as below 
	\begin{equation}\label{oce}
		\lambda^\alpha=a+b \exp({-\lambda \tau}) -b \exp({-2\lambda \tau}).
	\end{equation}
	By multiplying the above equation with $\tau^\alpha$ to get a new equation with a less number of parameters, we get
	\begin{equation}\label{mce}
		\beta^\alpha=A+B \exp({-\beta}) -B \exp({-2\beta}),
	\end{equation}
	
	where $ \beta=\lambda \tau, \; A=a\tau^\alpha \; \text{and} \; B =b\tau^\alpha$.\\
	Since, $Re(\beta)=\tau Re(\lambda) \;\text{ and } \; \tau>0,$	
	the stability properties of (\ref{mce}) are the same as those of  (\ref{oce}).
	\subsection{Stability Analysis}
	We conduct the stability analysis using the same approach outlined in \cite{smith2011introduction}.\\
	The system is asymptotically  stable if all the roots $\lambda$ of (\ref{oce}) satisfy
	\begin{equation}\label{sc}
		Re(\lambda)<0\;. 
	\end{equation}
	If $\tau=0$, then the condition (\ref{sc}) simplifies to the form
	$a<0$ which gives $A<0$ using $A=a\tau^\alpha.$.\\
	Now, we take $\tau>0$ and discuss the properties of roots of (\ref{oce}). Note that the sign of $Re(\beta)$ will be the same as that of $Re(\lambda)$. Therefore, we analyze eq. (\ref{mce}) and provide the properties of eq. (\ref{oce}) using the relation $\beta=\lambda\tau$.\\
	We write $\beta=u+iv,\; u,v\in \mathbb{R}$ and find the expressions for the curves which are the boundaries of the stable region in the $AB$-plane.
	On the boundary of the stable region, the condition
	\begin{equation}
		\label{bc}
		Re(\beta)=0 
	\end{equation}   hold. If $\beta \in \mathbb{R}$, the boundary condition \ref{bc} will become $\beta=0$, which gives $A=0$ (by (\ref{mce})).\\
	So, $A=0$ is a branch of the boundary curve in the $AB$-plane. Since for $\tau=0$, $A<0$ is the stability region and $A=0$ is the boundary curve, so $A>0$, i.e., positive $A$-axis is the region of instability. In the following theorem, we prove that for every $\tau>0$, the system will be unstable in the first and fourth quadrants of the $AB$-plane.
	
	\begin{The}
		Whenever $a>0$, the fractional delay differential equation 
		\begin{equation*}
			D^{\alpha}x(t)=ax(t)+bx(t-\tau)-bx(t-2\tau)    
		\end{equation*}
		is unstable for all $\tau\ge0$.
	\end{The}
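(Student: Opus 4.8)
The plan is to work with the reduced characteristic equation \eqref{mce}, namely $\beta^\alpha = A + B\exp(-\beta) - B\exp(-2\beta)$ with $A = a\tau^\alpha$, and to show that whenever $A>0$ this equation has a root $\beta$ with $\mathrm{Re}(\beta)>0$; by the equivalence of the root-location of \eqref{mce} and \eqref{oce} this forces $\mathrm{Re}(\lambda)>0$ for some root of the original characteristic equation, hence instability. Since $a>0$ is equivalent to $A>0$ for every $\tau>0$ (and the case $\tau=0$ reduces to the scalar condition $a<0$ for stability, which already fails), it suffices to exhibit such a root.

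The key idea is a continuity/intermediate-value argument on the real axis. Set $\beta = u \in (0,\infty)$ real and define $F(u) = u^\alpha - A - B e^{-u} + B e^{-2u}$. As $u \to 0^+$ we have $F(0^+) = -A - B + B = -A < 0$. As $u \to +\infty$, the term $u^\alpha \to +\infty$ while $Be^{-u} - Be^{-2u} \to 0$, so $F(u) \to +\infty$. Since $F$ is continuous on $(0,\infty)$, there exists $u_0 > 0$ with $F(u_0) = 0$, i.e. a real root $\beta = u_0 > 0$ of \eqref{mce}. Translating back via $\beta = \lambda\tau$ with $\tau>0$ gives a real $\lambda = u_0/\tau > 0$ satisfying \eqref{oce}, which violates \eqref{sc}; hence the system is unstable. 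For $\tau = 0$ the equation is $\lambda^\alpha = a > 0$, which has the positive real root $\lambda = a^{1/\alpha}$, so instability holds there too. I would also note this reproves, consistently with the preliminary discussion in the text, that the positive $A$-axis is a region of instability.

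The main obstacle — really the only subtlety — is making sure the argument is valid uniformly in the sign and magnitude of $B$ (equivalently of $b$), since the theorem asserts instability throughout the first and fourth quadrants of the $AB$-plane, not just for special $B$. The point to emphasize is that the estimate $F(0^+) = -A$ uses the algebraic cancellation $-Be^{-0} + Be^{-0} = 0$ at $u=0$ regardless of $B$, and that the growth $u^\alpha \to \infty$ dominates the bounded oscillatory term $B(e^{-u}-e^{-2u})$ for \emph{any} fixed $B\in\mathbb{R}$; so the intermediate-value step goes through with no restriction on $B$. One should be slightly careful to phrase the conclusion in terms of $\mathrm{Re}(\lambda)$: a single root with positive real part is enough for instability, which is exactly what the real-root construction provides, so no further case analysis on complex roots is needed.
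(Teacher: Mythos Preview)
Your proof is correct and follows essentially the same route as the paper: both produce a positive real root of the modified characteristic equation via an intermediate-value argument on $(0,\infty)$. The paper separates the two functions $P(\beta)=\beta^\alpha-A$ and $Q(\beta)=B(e^{-\beta}-e^{-2\beta})$ and treats the cases $b>0$ and $b<0$ individually, whereas you combine them into $F=P-Q$ and handle all signs of $B$ at once; the underlying idea is identical.
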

	
	\begin{proof} Consider the modified characteristic equation (\ref{mce}).

		Take $ P(\beta)=\beta^\alpha -A $ and $Q(\beta)=B(\exp(-\beta)-\exp(-2\beta)).$
		We prove that there always exists a positive real root of (\ref{mce}) whenever $a>0$, i.e., there always exists a $\beta>0$  s.t  $P(\beta)=Q(\beta)$, whenever $a>0$.\\
		
		We consider two cases:\\
		
		Case 1: Assume $a>0$ and $b>0$ (i.e first quadrant).\\
		If $a>0$ and $b>0$ then $A>0$ and $B>0.$ \\
		It can be easily seen that $P(A^{1/\alpha})=0$ and $P(0)=-A$.
		Since $P'(\beta)=\alpha \beta^{\alpha-1} > 0 $. Therefore, $P(\beta)$ is monotonic increasing. Also, $\lim_{\beta\rightarrow\infty }P(\beta)=\infty$.
		Hence, $Range(P)=[-A,\infty)$.\\
		Now, $Q(0)=0$ and $Q(\beta) >0, \; \forall \; \beta>0. $ 
		Also, we observe that	$Q'(\beta)=0 $ when $\beta=\log2$ and $Q''(\log2)=-0.5B<0 $. Hence, $Q(\beta)$ has a local maxima (say $Q_M$) and $\lim_{\beta\rightarrow\infty}Q(\beta)=0.$
		So, $Range(Q)=[0,Q_M].$\\
		We can conclude that $Range(Q)\subset Range(P)$ (see Figure \ref{Fig. 1}).\\
		\begin{figure}[H]
			\centering{
				\includegraphics[scale=0.55]{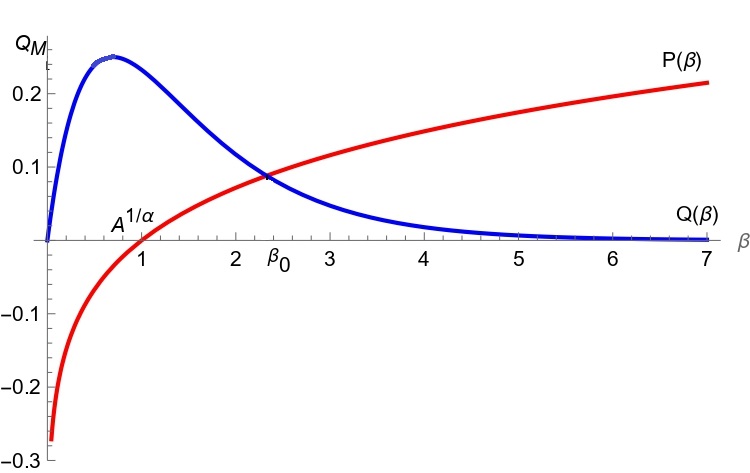} 
				\caption{P and Q intersect at some $\beta_0>0$}
				\label{Fig. 1}  }
		\end{figure}
		
		Hence, there always exist some $\beta_0>0$ s.t $P(\beta_0)=Q(\beta_0)$.
		This shows that there exists a real positive root to (\ref{mce}). So, the system is unstable in the first quadrant.\\
		
		Case 2: Assume $a>0$ and $b<0$ (i.e fourth quadrant).\\
		Therefore, we have $A>0$ and $B<0.$ 
		Since the function $P(\beta)$ is independent of $B$,
		the $Range(P)=[-A,\infty)$ in this case also.\\	 
		Now, $Q(0)=0$ and $Q(\beta) <0, \forall \; \beta>0. $ 
		Also, we observe that $Q'(\beta)=0 $ when $\beta=\log2$ and $Q''(\log 2)=-0.5B>0.$ Hence, in this case, $Q(\beta)$ has a local minima (say $Q_m$) and $\lim_{\beta\rightarrow\infty}Q(\beta)=0.$
		So, $Range(Q)=[Q_m,0].$\\
		We can conclude that $Range(Q)\cap Range(P) \neq \phi$ (see Figure \ref{Fig. 2}).\\
		\begin{figure}[H]
			\centering{
				\includegraphics[scale=0.5]{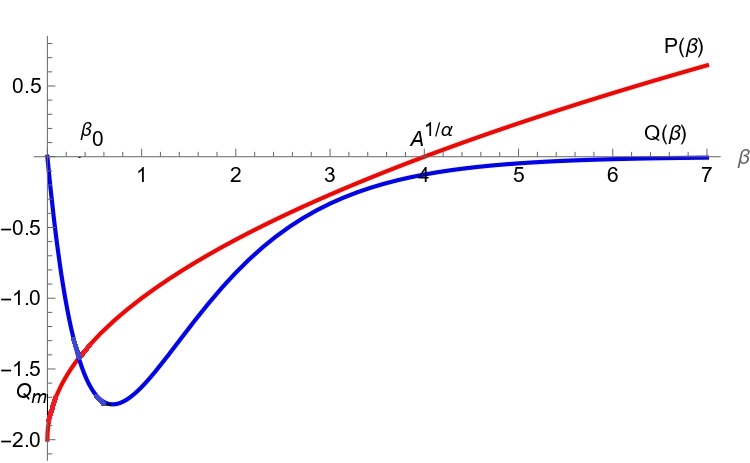}
				\caption{P and Q intersect at some $\beta_0>0$}
				\label{Fig. 2} }
		\end{figure}
		
		Hence, there always exist some $\beta_0>0$ s.t $P(\beta_0)=Q(\beta_0)$.	 Note that we can have one, two, or three such intersection points $\beta_0$, 
		This shows that $\exists$ a real positive root to (\ref{mce}). So, the system is unstable in the fourth quadrant also.

	\end{proof}
	
	\textbf{Note:} We denote the unstable region in the $ab$-plane by the letter 'U' in Figure \ref{fig:Fig. 5}.\\
	\vspace{0.2cm}\\
	Now, we analyze the stability in the second and third quadrants.  
	If $\beta \in \mathbb{C}$, then condition (\ref{bc}) gives $\beta=iv$. Since  $\overline{\beta}=-iv$ is also a root of (\ref{mce}), we may take $v>0$. By putting in $(\ref{mce})$ and solving for A and B in terms of parameter $v$, the following expressions can be obtained:
	\begin{eqnarray}
		A(\alpha, 
		v) &=& v^\alpha \cos\left({\frac{\alpha\pi}{2}}\right) + \left( \frac{\cos(2 v) - \cos(v)}{\sin(2 v) - \sin(v)}\right) v^\alpha \sin\left({\frac{\alpha\pi}{2}}\right) \label{A},\\
		B(\alpha, v) &=& v^\alpha \sin\left({\frac{\alpha\pi}{2}}\right)\left( \frac{\cos(2 v) - \cos(v)}{\sin(2 v) - \sin(v)}\right). \label{B}
	\end{eqnarray}

	In Figure \ref{fig:Fig. 3}, we present the parametric plots of $(A(\alpha,v), B(\alpha,v)),\; v\in (0,2\pi) $ for some values of $\alpha$ restricted to second and third quadrants of AB-plane. These restricted subintervals of $(0,2\pi)$ are given by\\
	\[I_1=\left[ (1-\alpha)\frac{\pi}{3},\frac{\pi}{3}\right) ,\;I_2=\left[ (5-\alpha)\frac{\pi}{3},\frac{5\pi}{3}\right)\;\text{and}\; I_3=\left[ (3-\alpha)\frac{\pi}{3},\pi\right).\]  \\
	
	Note that, at the left end of each of these subintervals, $A(\alpha,v)=0$ and at the right end, $\Gamma$ becomes an unbounded curve. If $\alpha$ is very small, there is no intersection between the first and second branches. 
	
\begin{figure}[H]
	\centering
	
	 \begin{subfigure}[b]{0.35\textwidth}
		\centering
		\includegraphics[width=\textwidth]{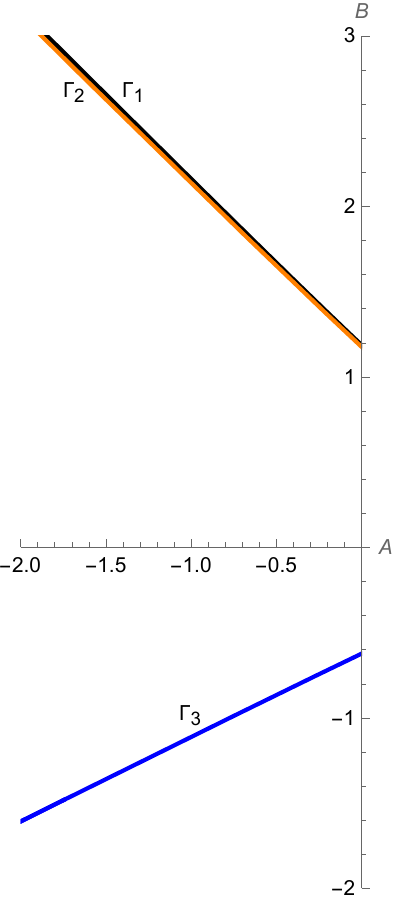} 
		\caption{$\alpha=0.2$}
		\label{fig:3(a)}
	\end{subfigure}
	\hspace{0.2cm}
	 \begin{subfigure}[b]{0.42\textwidth}
		\centering
		\includegraphics[width=\textwidth]{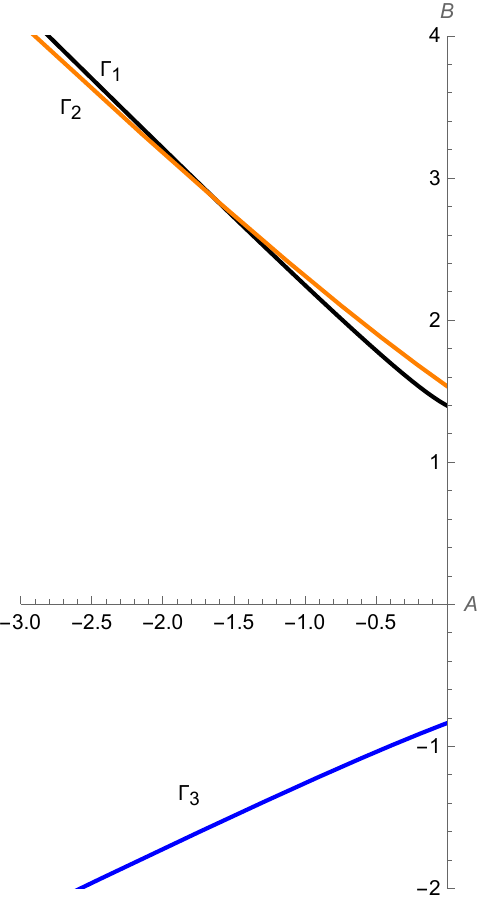} 
		\caption{$\alpha=0.5$}
		\label{fig:3(b)}
	\end{subfigure}
		\hspace{0.2cm}
	\begin{subfigure}[b]{0.38\textwidth}
		\centering
		\includegraphics[width=\textwidth]{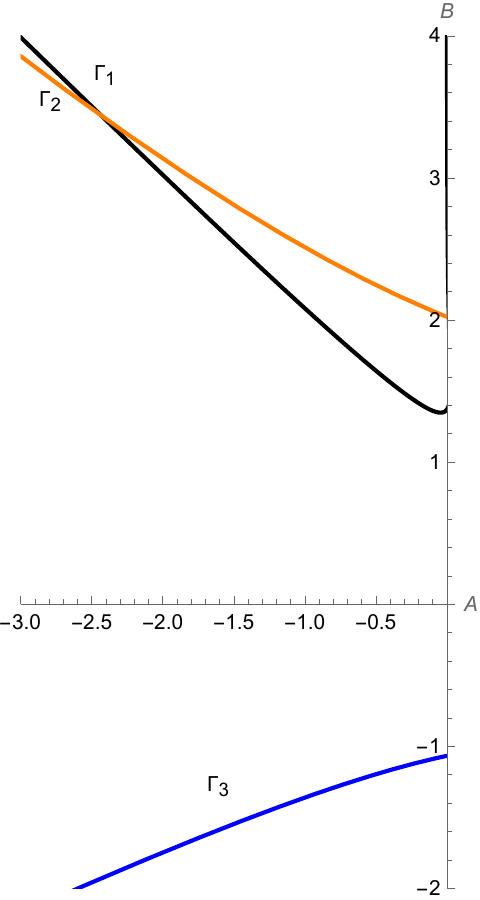} 
		\caption{$\alpha=0.8$}
		\label{fig:3(c)}
	\end{subfigure}
		\hspace{0.4cm}
	\begin{subfigure}[b]{0.38\textwidth}
		\centering
		\includegraphics[width=\textwidth]{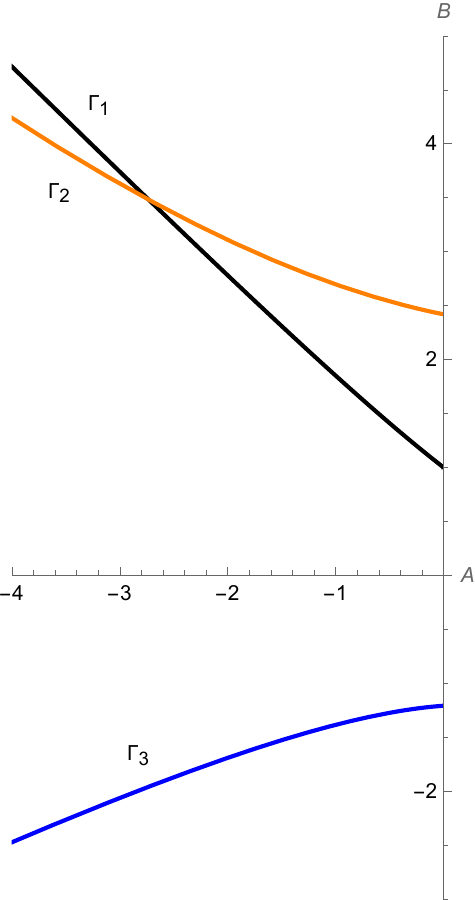} 
		\caption{$\alpha=1$}
		\label{fig:3(d)}
	\end{subfigure}
	\caption{Parametric plot of $(A(\alpha,v), B(\alpha,v))$ in second and third quadrants for different values of $\alpha$}
	\label{fig:Fig. 3}

\end{figure}

	\clearpage
	
	For $\tau=0$, the system is stable if $A<0$, which is the negative A-axis. After this, the stability will change at the branches of curves given in Figure \ref{fig:Fig. 3}.\\
	
	To get those critical values where the stability will change, we first find the value of $v \in I_1 \cup I_2 \cup I_3$ corresponding to the critical value of delay. Such values of $v$ can be determined by solving 
	\begin{equation}
		\frac{b}{a} - \left( \frac{1}{2} \csc\left(\frac{v}{2}\right) \sec\left(\frac{(3 v + \pi \alpha)}{2}\right)  \sin\left(\frac{\pi \alpha }{2}\right)\right) = 0. 
		\label{rel}
	\end{equation}
	
	Note that (\ref{rel}) is obtained by using the relations $A=a\tau^\alpha \; \text{and} \; B =b\tau^\alpha$  i.e.  $\frac{b}{a}=\frac{B}{A}$.\\
	Since this is a transcendental equation, it will have many roots.\\
	By putting different values of $\alpha$ and $(a, b)$ value in the respective quadrants, we get two values of $v$ in the second quadrant and one in the third.\\
	Now, in the second quadrant, we have two branches of the curve $\Gamma=(A, B)$ parametrized by (\ref{A}) and (\ref{B}), namely $\Gamma_1(\alpha, v)=(A(\alpha, v), B(\alpha, v)),\; v\in I_1
	\; \text{and}\; \Gamma_2(\alpha, v)=(A(\alpha, v),B(\alpha, v)),\; v\in I_2\; $,

	The boundary of the stable region in this quadrant is given by the part of the branch that is closest to the A-axis. We denote the boundary in this quadrant as $\Gamma_4$. \\
	In the third quadrant, there is only one branch of the curve $\Gamma=(A,B)$, namely $\Gamma_3(\alpha, v)=(A(\alpha, v), B(\alpha, v))$ for $v\in I_3$.
	\vspace{0.3cm}	\\
	\textbf{Observations 2.1}
	In the second quadrant, $\alpha^*=0.2644385$ bifurcates the two behaviors as follows (see Figure \ref{fig:bif pt}): 
	\begin{itemize}
		\item If $\alpha\leq\alpha^*$ then the second branch $\Gamma_2(\alpha, v)$ is closest to the $A$-axis and hence, it is the boundary of the stable region in the second quadrant (e.g., see Figure \ref{fig:3(a)}) i.e., $\Gamma_4=\Gamma_2.$ \\
		\item If $\alpha>\alpha^*$ then the curves $\Gamma_1(\alpha, v)$ and $\Gamma_2(\alpha, v)$ intersect at a point, say $(A_1(\alpha),B_1(\alpha))$ (e.g. see Figures \ref{fig:3(b)},\ref{fig:3(c)},\ref{fig:3(d)}). (see data set 1 for the code to find $A_1(\alpha),\;B_1(\alpha)$ numerically and Figure \ref{int} for the graph.) 
		
		\begin{figure}[!h]
			\centering
			\includegraphics[scale=0.5]{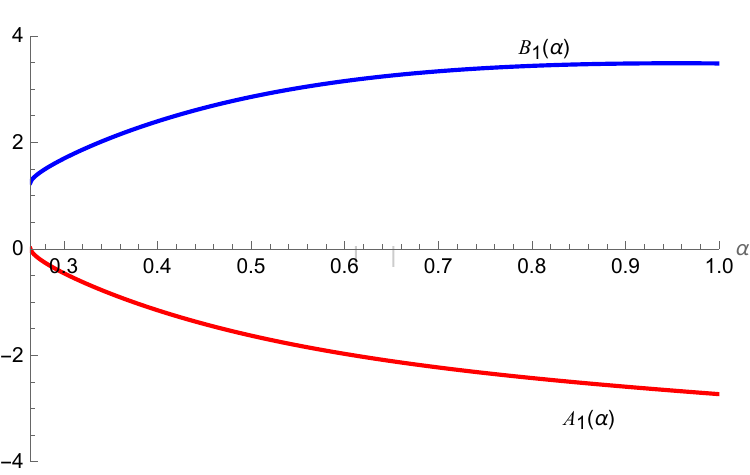}
			\caption{Curves $A_1(\alpha)$ and $B_1(\alpha)$}
			\label{int}
		\end{figure}

		\begin{center}
			\begin{itemize}
				\item 	If $A \le A_1(\alpha)$ then the boundary is given by $\Gamma_2(\alpha,v)$,\\
				\item   If $A>A_1(\alpha)$ then $\Gamma_1(\alpha, v)$ gives the boundary.  \\   
			\end{itemize}
		\end{center}
	\end{itemize}
	i.e., \begin{equation}
		\Gamma_4 = 
		\begin{cases}
			\Gamma_2, & \text{if}  A\le A_1 \\
			\Gamma_1, & \text{if}  A>A_1
		\end{cases}
	\end{equation}
	
	At the bifurcation point, the two branches of the boundary curve have an intersection on the B-axis. So, for $\alpha=\alpha^*, \; \exists \; v_1 \in I_1 \; \text{and} \; v_2 \in I_2$ such that
	\[ A(\alpha^*, v_1)=A(\alpha^*, v_2)=0 \; \text{and} \; B(\alpha^*, v_1)=B(\alpha^*, v_2) \;\; \text{(as shown in Figure \ref{fig:bif pt}).} \]
	As already discussed, $A(\alpha,\frac{(1-\alpha)\pi}{3})=A(\alpha,\frac{(5-\alpha)\pi}{3})=0.$ So, $v_1=\frac{(1-\alpha)\pi}{3} \; \text{and} \; v_2=\frac{(5-\alpha)\pi}{3}.$
	Therefore, we solve $B(\alpha,\frac{(1-\alpha)\pi}{3})=B(\alpha,\frac{(5-\alpha)\pi}{3})$ to get the approximate value of the bifurcation point viz. $\alpha^*=0.2644385.$\\

\begin{figure}[H]
	\centering
	
	\begin{subfigure}[b]{0.4\textwidth}
		\centering
		\includegraphics[width=\textwidth]{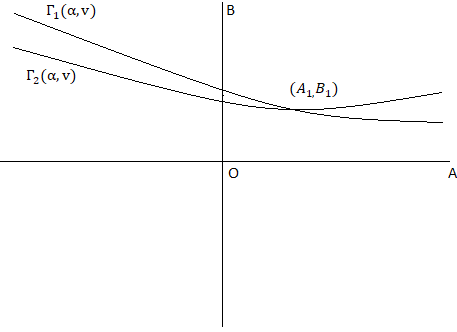} 
		\caption{$\alpha<\alpha^*$}
		\label{fig:bb}
	\end{subfigure}
	\hspace{0.2cm}
	\begin{subfigure}[b]{0.4\textwidth}
		\centering
		\includegraphics[width=\textwidth]{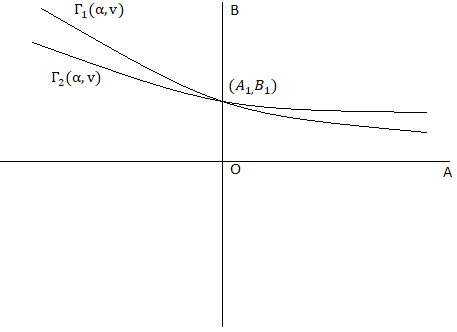} 
		\caption{$\alpha=\alpha^*$}
		\label{fig:ab}
	\end{subfigure}
	\hspace{0.2cm}
	\begin{subfigure}[b]{0.5\textwidth}
		\centering
		\includegraphics[width=\textwidth]{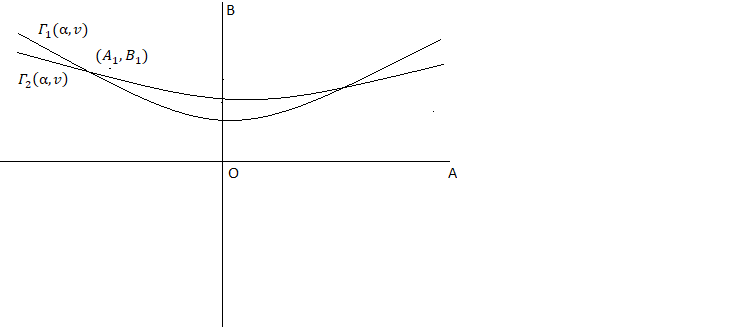} 
		\caption{$\alpha>\alpha^*$}
		\label{fig:afb}
	\end{subfigure}

	\caption{Plots showing behavior of bifurcation point $\alpha^*$}
	\label{fig:bif pt}
	
\end{figure}

	The stable region is bounded by the curves $\Gamma_4 \; \text{and} \; \Gamma_3$ closest to the negative $A-$ axis. The region outside the stable region is the unstable one. Thus, Figure \ref{fig:Fig. 3} show the stable and unstable regions for the modified characteristic equation (\ref{mce}). \\
	Note that the stability in the $ab$-plane depends on delay $\tau$ also. In the next section, we discuss such regions in the $ab$-plane.
	
	\section{Stability diagram in $ab$-plane} 
	\label{stab}
	In this section, we use the analysis in section \ref{MR} and provide various stable regions in the $ab$-plane for the given DDE (\ref{eq.1}).\\
	Recall, $A=a\tau^\alpha\; \text{and} \; B=b\tau^\alpha$. For any fixed $(a,b)$, we consider the vector 
	\[\overline{V}=\{(A, B)=(a\tau^\alpha, b\tau^\alpha)\;|\; \tau \ge 0\}.\]
	At $\tau=0,\; \overline{V}$ is at origin in the $AB$-plane. As we increase $\tau,\; \overline{V} $ represents an arrow.\\
	We have the following behaviors:
	\begin{enumerate}
		\item For given pair $(a, b),\; \overline{V}$ generates an arrow which lies entirely in the unstable region in the $AB$-plane. Such pairs $(a, b)$ form an unstable region in the $ab$-plane. 
		\item For given $(a, b),\; \overline{V}$ generates an arrow lying completely in the stable region in the $AB$-plane. This gives the stable region in the $ab$-plane.\\
		
		The two regions described above are independent of the delay.
		\item There are some pairs $(a, b)$ such that a part of $\overline{V}$ lies in the stable region and another part in the unstable one.\\
		e.g.,
		\begin{itemize}
			\item If $\exists\; \tau^*$ such that\\
			$0<\tau<\tau^*$ $\implies$ $\overline{V}\in$ stable region and \\
			$\tau>\tau^*$ $\implies$  $\overline{V}\in$ unstable region.\\
			Then, we get a single stable region (SSR) in the $ab$-plane.
			\item If $\exists \; \tau_1^*\; \text{and}\; \tau_2^* $ such that \\
			$0<\tau<\tau_1^*\; \text{and} \; \tau_2^*<\tau<\infty$ $\implies$ $\overline{V}\in$ stable region and \\
			$\tau_1^*<\tau<\tau_2^*$ $\implies$  $\overline{V}\in$ unstable region.\\
			Then, we get stability switch (SS) viz. S-U-S in the $ab$-plane.
		\end{itemize}
	\end{enumerate}
	The critical values $\tau^*,\;\tau_1^* \; \text{and} \; \tau_2^*$ are given by the intersection of $\overline{V}$ with $\Gamma_4 \; \text{or} \; \Gamma_3$ and can be expressed as 
	\[\left(\frac{A(\alpha, v)}{a}\right)^\frac{1}{\alpha}=\left(\frac{B(\alpha, v)}{b}\right)^\frac{1}{\alpha}, \;\; \text{for suitable A and B on the boundary $\Gamma_3 \; \text{or}\; \Gamma_4$}\] 
	(by using $A=a\tau^\alpha, B=b\tau^\alpha$).\\
	In this expression, if the boundary is given by $\Gamma_j$ then we can find $v\in I_j$ by solving \ref{rel}.\\
	Expressions for all the critical values are given in data set 2.
	The next section provides more details on these critical values and the SS and SSR regions.
	\subsection{Main Analysis}
	Note that, by using expressions $A=a\tau^\alpha \; \text{and} \; B=b\tau^\alpha$, we can convert the regions of stability in the  $AB$-plane to $ab$-plane. Both $A$-axis and $B$-axis will convert to $a$-axis and $b$-axis respectively in $ab$-plane. Also, any line passing through the origin in $AB$-plane, say $B=mA$,  will be converted to $b=ma$.\\
	
	In the second quadrant of the $AB$-plane, we observe that the boundary curve $\Gamma_4$ has a local minima but no local maxima.
	As a result, we can find a tangent $T_1$ (in the second quadrant) to $\Gamma_4$, which passes through the origin (denoted as $b=m_1(\alpha)a$). After applying the transformations we discussed, the region enclosed by the $T_1$ and the negative $A$-axis will represent the stable region in the $ab$-plane.
	If we consider a line $B=mA$ above $T_1$ where $m<m_1$ and sufficiently small $|m_1-m|$ then this line will intersect $\Gamma_4$ in two points leading to two critical points $\tau_1^*$ and $\tau_2^*$ in the SS-region of $AB$-plane. As we decrease $m$, the second intersection goes away from the origin and vanishes as we reach $m=-1$ (Further details are provided in the subsequent discussion).	Thus, the line $T_2$ given by $B=-A$ is another boundary $b=-a$ in the $ab$-plane.
	
	The region bounded by $T_1$ and $T_2$ in the $ab$-plane is the SS-region, where we get two critical values $\tau_1^*$ and $\tau_2^*$ discussed in the previous section. If we decrease $m$ further i.e., $-1>m>-\infty$ then the line $B=mA$ lies between $T_2$ and the vertical $B$-axis and cuts $\Gamma_4$ only at one point. This gives only one critical value $\tau^*$ of the delay and the region SSR in the $ab$-plane.\\ 
	Also, in the third quadrant, only one branch $\Gamma_3$ of the boundary exists. Let us denote by $T_3$, the tangent line to $\Gamma_3$, passing through the origin. We show that $T_3$ is given by $B=\frac{1}{2}A$ in the following discussion. \\
	So, the region bounded by $T_3$ and negative  $A$-axis will be the stable region and the region bounded by $T_3$ and negative $B$-axis will be the 
	SSR (Single Stable Region) in the $ab$-plane after the transformations we discussed (see Figure \ref{Fig. 4}).\\
	
	\begin{figure}[H]
		\centering{\includegraphics[scale=0.6]{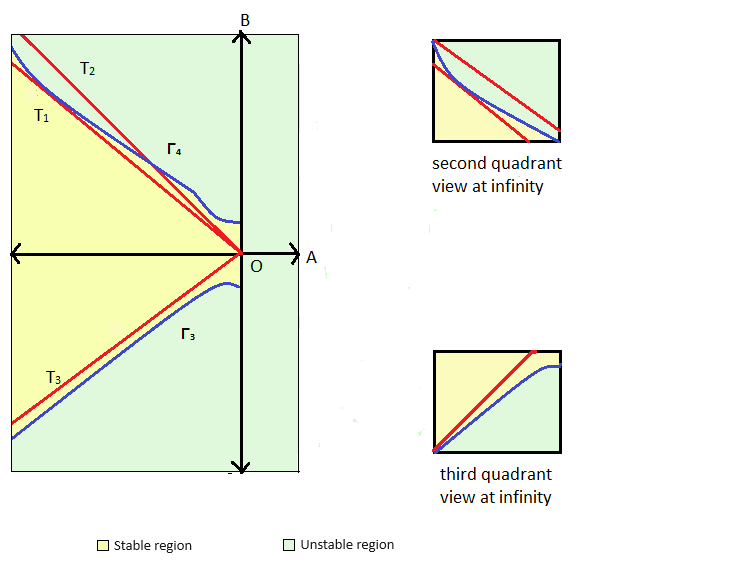}}
		\caption{Tangent lines $T_1,T_2\; \text{and} \; T_3$ (not to the scale)}
		\label{Fig. 4}
	\end{figure}
	
	\subsection{Expressions for $T_1,\; T_2 \; \text{and} \; T_3$}
	
	Note that, $T_1$ is tangent to $\Gamma_2$ and $T_3$ is tangent to $\Gamma_3$. The line $T_2$ has first intersection with $\Gamma_2$ and is tangent to $\Gamma_2$ at infinity.\\
	For fixed $\alpha\in(0,1],$ the equation of tangent to the curve $\Gamma_j(\alpha,v)=(A(\alpha,v),B(\alpha,v))$, touching at some point $\Gamma_j(\alpha,v_0)$, in parametric form is given by 
	\begin{equation}
		I(v)=\Gamma_j(\alpha,v_0)+(v-v_0)\frac{\partial\Gamma_j}{\partial v}(\alpha,v_0)  
	\end{equation}
	where $v_0 \in I_j, \; j=2,3$.

	This tangent line should also pass through the origin, i.e.,
	\begin{eqnarray}
		&&    I(v)=0,\;\; \text{for some $v$}.    \nonumber \\
		\vspace{0.01cm}       \nonumber\\ 
		&&  \Rightarrow \Gamma(v_0)+(v-v_0)\frac{\partial\Gamma_j}{\partial v}(\alpha,v_0)=0  \nonumber \\
		\vspace{0.02cm}        \nonumber\\ 
		&& \Rightarrow A(\alpha,v_0)+(v-v_0)\frac{\partial A}{\partial v}(\alpha,v_0)=0, \nonumber\\ 
		&&\;\;\;\; B(\alpha_0,v_0)+(v-v_0)\frac{\partial B}{\partial v}(\alpha,v_0)=0. \nonumber\\	
		\vspace{0.02cm}      \nonumber\\ 
		\text{ Eliminating $v$, we get}   \nonumber \\
		&&\frac{A(\alpha,v_0)}{\frac{\partial A}{\partial v}(\alpha,v_0)}=\frac{B(\alpha,v_0)}{\frac{\partial B}{\partial v}(\alpha,v_0)}.        \label{cft} 
	\end{eqnarray}

	After solving this equation for $v_0(\alpha)$, we find the slope of tangent line using 
	\begin{equation}
		m(\alpha)=\frac{dB}{dA}(\alpha,v_0)=-\frac{(\csc(\frac{v_0}{2}))^2(-v_0\cos v_0 + 2v_0 \cos 2v_0 + \alpha(\sin v_0 - \sin 2v_0))\sin(\frac{\alpha\pi}{2})}{2(\alpha \cos(\frac{\alpha\pi}{2})+\alpha \cos(3v_0+ \frac{\alpha\pi}{2})-3v_0\sin(\frac{\alpha\pi}{2}))}.
		\label{m}
	\end{equation}
	
	Now, by solving (\ref{cft}), we get many values of $v$, but we want the values of v in the intervals $I_2\cup \{\frac{5\pi}{3}\}$ and $I_3\cup \{\pi\}$.\\
	\begin{enumerate}
		\item  In $I_3\cup\{\pi\}$ (i.e., in the third quadrant), the only permissible value of $v$ is $v_0=\pi$ for all values of $\alpha$. By putting $v_0=\pi$ in (\ref{m}), we get $m=1/2.$ \\
		Hence, the equation of tangent line $T_3$ is $B=A/2.$\\
		Using the discussion in this section's beginning, this tangent becomes $b=a/2$ in $ab$-plane.
		\item  In $I_2\cup\{\frac{5\pi}{3}\}$ (i.e., in the second quadrant), we get a bifurcation value $\alpha^{**}=2/3 $ such that 
		\begin{itemize}
			\item For $\alpha\le2/3, \; v_0=\frac{5\pi}{3}$ is the only solution of (\ref{cft}). By using this value of $v_0$ in  (\ref{m}), we get $m_1=-1.$
			Hence, the equation of tangent line $T_1$ is $B=-A$.
			In the $ab$-plane, the corresponding line will be $b=-a.$
			\item For $\alpha>2/3, \; v_0=v_0(\alpha)<\frac{5\pi}{3}$, which is not constant. Therefore, $m_1(\alpha)$ is also not a constant.\\
			Hence, the equation of tangent line $T_1$ is $b=m_1(\alpha)a$.\\
			
			The expression for slope $m_1(\alpha)$ is provided in the data set 3.
		\end{itemize}
		\vspace{0.1cm}
		
		The bifurcation point $\alpha^{**}=2/3$ can be determined by equating $v_0(\alpha)=\frac{5\pi}{3}$ and solving for $\alpha$.
		
		\item Now, $T_2$ is tangent to $\Gamma_2$ at its unbounded end.\\
		Note that, $\Gamma_2$ becomes unbounded as $v\rightarrow\frac{5\pi}{3}$. Therefore, taking limit $v_0\rightarrow\frac{5\pi}{3}$ in (\ref{m}), we get $m(\alpha)=-1, \; \forall \; \alpha \in (0,1).$  Thus, the equation of $T_2$ is given by $B=-A$. $b=-a$ will be the corresponding line in the $ab$-plane.
	\end{enumerate}

	\begin{itemize}
		\item   So, $T_1$ and $T_2$ are the same for $\alpha\le2/3$. Hence, there will be no stability switch for these values of $\alpha.$

	\end{itemize}
	
	We present the stability diagrams in $ab$-plane in Figure \ref{fig:Fig. 5}, and summarize these results in Theorem \ref{Main Theorem}.
	\begin{The}
		\label{Main Theorem}
		Consider the FDDE 
		\begin{equation}
			D^\alpha x(t)=ax(t)+bx(t-\tau)-bx(t-2\tau), \;\; 0<\alpha\le1 \; \text{and}\; \tau\ge0.
		\end{equation}
		The zero solution of this equation is --
		\begin{enumerate}
			\item asymptotically stable $\forall \; \tau\ge0$ if
			\begin{itemize}
				\item $0<\alpha\le2/3, \; a<0$ and $a/2<b<-a$.
				\item $2/3<\alpha\le1, \; a<0$ and $a/2<b<m_1(\alpha)a$,
			\end{itemize} 
			where $m_1(\alpha)$ is given in data set 3 and sketched in Figure \ref{slope}.\\
			\begin{figure*}[!h]
				\centering
				\includegraphics[scale=0.5]{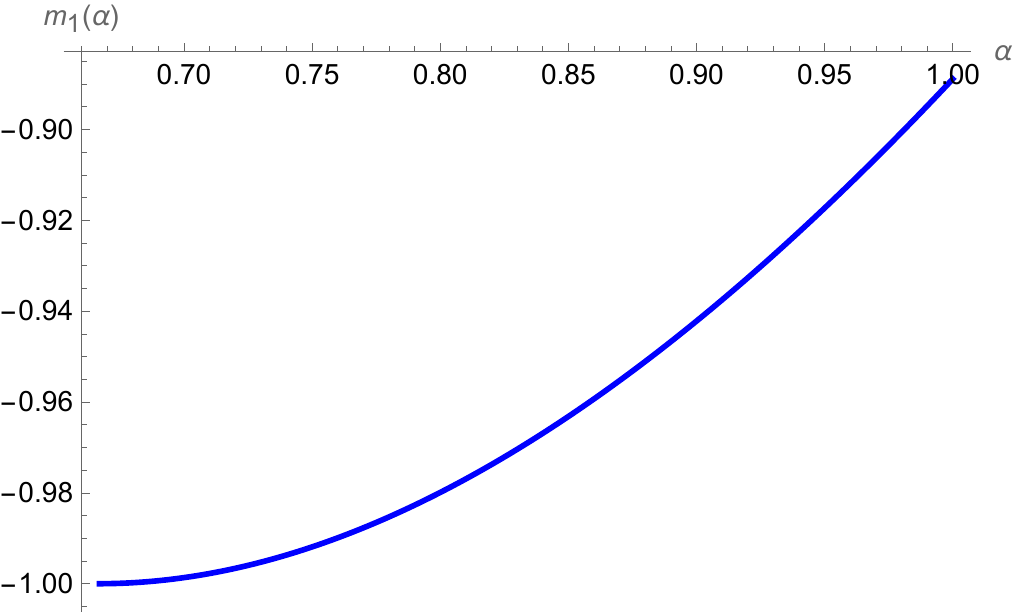}
				
				\caption{$\alpha \; \text{vs} \; m_1(\alpha)$} 
				\label{slope}
			\end{figure*}
			Also,	$m_1(\alpha)$ can be approximated as a polynomial of degree 4,i.e.,\\
			$m_1(\alpha)\approx 0.283115\alpha^4-1.53076\alpha^3+3.53266\alpha^2-3.00438\alpha-0.16952.$
			
			\item unstable $\forall \; \tau \ge 0 \; \text{if} \; a>0.$
			\item asymptotically stable for $0\le\tau<\tau^*$ and unstable for $\tau>\tau^*$, leading to single stable region (SSR), if $a<0$ and $b\in(-\infty,a/2)\cup(-a,\infty)$, where the critical value $\tau^*$ is described as below:\\
			If $b\in(-a,\infty)$ i.e., $(a,b)$ belongs to the second quadrant then,
			\begin{itemize}
				\item  If $\alpha\le0.2644385$ then critical value $\tau^*(\Gamma_2)$ of delay is given by the intersection of $\bar{V} \; \text{with} \; \Gamma_2$.
				\item If $\alpha>0.2644385$ then we find the point $(A_1(\alpha),B_1(\alpha))$ (see data set 1).\\
				--  If $A<A_1(\alpha)$ then $\tau^*(\Gamma_2)$ will give the critical value of delay. \\
				-- If $A>A_1(\alpha)$ then the critical delay value will be $\tau^*(\Gamma_1)$.
			\end{itemize}
			If $b\in(-\infty,a/2)$ i.e., $(a,b)$ belongs to the third quadrant, then the critical value will be given by $\tau^*(\Gamma_3)$.\\
			The values $\tau^*(\Gamma_j),\; j\in\{1,2,3\}$ are given in data set 2.
			
			\item asymptotically stable for $0<\tau<\tau_1^*$, unstable for $\tau_1^*<\tau<\tau_2^*$ and again asymptotically stable for 
			$\tau>\tau_2^*$, leading to the stability switch (SS), if $2/3<\alpha\le1$, $a<0$ and $m_1(\alpha)a<b<-a$, where $m_1(\alpha)$ is as in case (1) and the constants $\tau_1^*,\;\tau_2^*$ are mentioned in the data set 2.
		\end{enumerate}
	\end{The}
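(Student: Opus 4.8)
The plan is to obtain Theorem~\ref{Main Theorem} by assembling the pieces built up in Sections~\ref{MR} and~\ref{stab} rather than by any new computation. The three ingredients are: the reduction of (\ref{eq.1}) to the rescaled characteristic equation (\ref{mce}) via the dictionary $A=a\tau^\alpha$, $B=b\tau^\alpha$ (so that $\operatorname{Re}\beta$ and $\operatorname{Re}\lambda$ have the same sign); the instability theorem proved in Section~\ref{MR} for $a>0$; and the description of the stable region of (\ref{mce}) in the $AB$-plane together with the tangent lines $T_1,T_2,T_3$ computed in Section~\ref{stab}. The organising observation is that, for a fixed pair $(a,b)$, the set $\overline V=\{(a\tau^\alpha,b\tau^\alpha):\tau\ge0\}$ is a ray issuing from the origin with slope $b/a$, lying in the second quadrant when $b>0$, in the third quadrant when $b<0$, and along the negative $A$-axis when $b=0$; and the zero solution is asymptotically stable for a given $\tau$ exactly when $\overline V(\tau)$ lies in the open stable region of the $AB$-plane.

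Statement~(2) is immediate, being the instability theorem for $a>0$. For $a<0$ I would argue by quadrant. In the third quadrant ($b<0$) the only boundary branch is $\Gamma_3$; by Section~\ref{stab} its unique tangent through the origin is $T_3:B=A/2$, and $\Gamma_3$ lies on the side of $T_3$ away from the negative $A$-axis. Hence a ray with $a/2<b<0$ (slope in $(0,1/2)$) never meets $\Gamma_3$ and the solution is stable for all $\tau$, whereas a ray with $b<a/2$ meets $\Gamma_3$ exactly once, at the $v\in I_3$ solving (\ref{rel}), giving the single crossing $\tau^*(\Gamma_3)$. In the second quadrant ($b>0$) one uses the shape of $\Gamma_4$ established in Section~\ref{stab}: it has a local minimum but no local maximum, its relevant branch $\Gamma_2$ becomes unbounded with tangent slope tending to $-1$, and the tangent from the origin $T_1$ has slope $m_1(\alpha)$, equal to $-1$ for $\alpha\le 2/3$ and given by (\ref{m}) at $v_0=v_0(\alpha)<5\pi/3$ for $\alpha>2/3$. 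Together with $T_2:B=-A$ these split the quadrant into three sectors: $0<b<m_1(\alpha)a$ (ray between the negative $A$-axis and $T_1$, no intersection with $\Gamma_4$, stable for all $\tau$); $m_1(\alpha)a<b<-a$ (ray between $T_1$ and $T_2$, two intersections, hence the pattern S--U--S; this sector is empty precisely when $\alpha\le 2/3$, since then $T_1=T_2$); and $b>-a$ (ray beyond $T_2$, one intersection). In the last sector the branch actually hit is $\Gamma_2$ or $\Gamma_1$ according to whether $A<A_1(\alpha)$ or $A>A_1(\alpha)$, by Observations~2.1, giving the stated $\tau^*(\Gamma_2)$ versus $\tau^*(\Gamma_1)$. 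Recalling that the positive $A$-axis ($A=0$) is already unstable at $\tau=0$, collecting these sectors yields statements~(1), (3) and~(4); each critical delay is recovered from an imaginary root $\beta=iv$ via $\tau^*=(A(\alpha,v)/a)^{1/\alpha}=(B(\alpha,v)/b)^{1/\alpha}$.

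The step I expect to require genuine care is promoting ``the ray crosses a boundary curve'' to ``the number of unstable roots changes by two in the asserted direction''. For this I would note that at $\tau=0$ equation (\ref{eq.1}) collapses to $D^\alpha x=ax$, which for $a<0$ is asymptotically stable by Matignon's criterion \cite{matignon1996stability} ($|\arg a|=\pi>\alpha\pi/2$); the roots of (\ref{mce}) vary continuously with $(A,B)$ and can lie on the imaginary axis only on the curves (\ref{A})--(\ref{B}) or at $\beta=0$; and a transversality computation --- differentiating (\ref{mce}) implicitly along $\overline V$ and checking $\operatorname{Re}(d\beta/d\tau)\neq0$ at $\beta=iv$ --- shows every such crossing is simple. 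Hence the unstable-root count $N(\tau)$ is piecewise constant, starts at $N(0)=0$, and jumps by $\pm2$ at each crossing; combined with the geometric intersection count above (zero, one or two points) this forces the sequences ``stable'', ``S--U'' and ``S--U--S'' exactly as claimed. The only other non-routine facts --- that $\Gamma_3$ has a single tangent from the origin of slope $1/2$ and lies to one side of it, and that $\Gamma_2$ is monotone enough to admit a single tangent $T_1$ from the origin and has limiting tangent slope $-1$ --- were established in Section~\ref{stab} by analysing (\ref{A}), (\ref{B}), (\ref{cft}) and (\ref{m}), and I would simply cite them; the quartic fit for $m_1(\alpha)$ is a numerical approximation and needs no proof.
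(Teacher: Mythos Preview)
Your proposal is correct and follows essentially the same route as the paper: the theorem is not proved by a separate argument but is explicitly presented as a summary of the analysis in Sections~\ref{MR} and~\ref{stab}, namely the instability theorem for $a>0$, the parametrised boundary branches $\Gamma_1,\Gamma_2,\Gamma_3$ in the $AB$-plane, the ray $\overline V$ with slope $b/a$, and the tangent lines $T_1,T_2,T_3$ that carve the second and third quadrants into the sectors S, SS and SSR. Your organisation by quadrant and by slope relative to $T_1,T_2,T_3$ mirrors the paper's ``Main Analysis'' subsection exactly, including the use of Observations~2.1 to decide between $\tau^*(\Gamma_1)$ and $\tau^*(\Gamma_2)$ and the collapse of the SS sector when $\alpha\le 2/3$ because $T_1=T_2$.

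The one place you go beyond the paper is the transversality/root-counting paragraph: the paper never verifies that $\operatorname{Re}(d\beta/d\tau)\neq 0$ at the imaginary crossings, nor that the unstable-root count jumps in the asserted direction; it relies instead on the geometric picture (the side of $\Gamma_j$ containing the negative $A$-axis is stable) together with numerical examples. Your plan to check this via Matignon's criterion at $\tau=0$, continuity of roots, and implicit differentiation of (\ref{mce}) is the natural way to make the paper's informal reasoning rigorous, but be aware that the paper itself does not carry out this step.
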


\begin{figure}[H]
	\centering
	
	\begin{subfigure}[b]{0.9\textwidth}
		\centering
		\includegraphics[width=\textwidth]{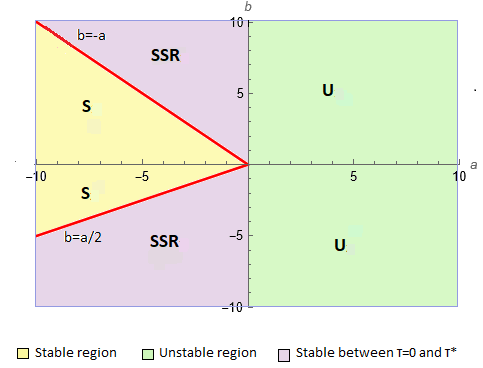} 
		\caption{$\alpha=0.2$}
		\label{fig:ab1}
	\end{subfigure}
	\hspace{0.2cm}
	\begin{subfigure}[b]{0.85\textwidth}
		\centering
		\includegraphics[width=\textwidth]{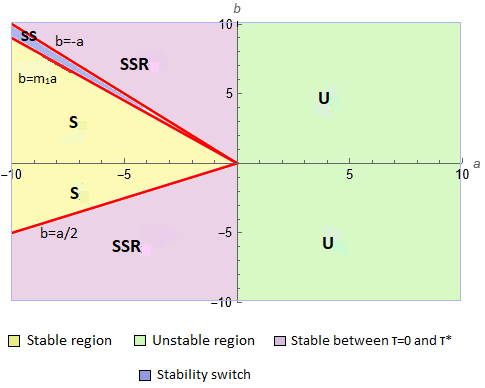} 
		\caption{$\alpha=0.5$}
		\label{fig:ab2}
	\end{subfigure}

	\caption{Stability diagrams in $ab$-plane}
	\label{fig:Fig. 5}
	
\end{figure}

	\newpage
	
	\section{Illustrative examples}
	\label{IE}
	\textbf{ Example 1 } Consider the FDDE (\ref{eq.1}) with $\alpha=0.4$. We take different values of $a,b\; \text{and} \; \tau$ and verify the stability regions given in Figure \ref{fig:ab1}.
	
	(i)
	Consider $ a=5,\; b=7\; \text{and}\; \tau=0.4.$\\
	Since both $a,b>0 $. So, $(a,b)\in U$ region.\\
	Hence, the system is unstable for any $\tau$.
	We sketched an unbounded solution for this case in Figure \ref{Fig. 7}. 
	\begin{figure}[H]
		\centering
		\includegraphics[scale=0.6]{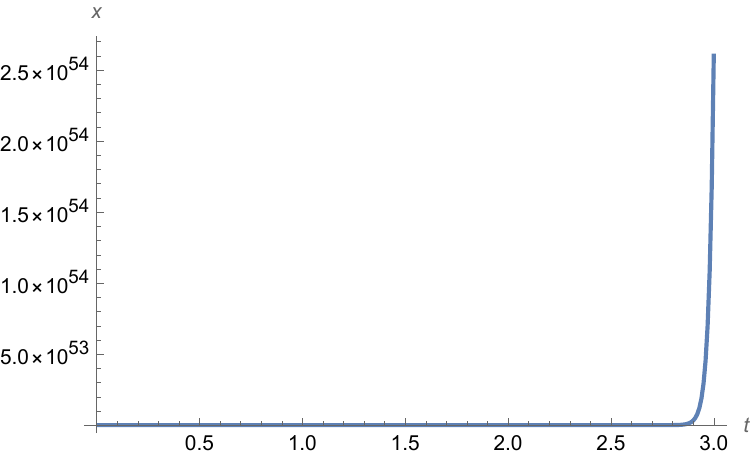}  
		\caption{$\alpha=0.4, a=5,b=7, \tau=0.4$, unstable solution}
		\label{Fig. 7}
	\end{figure}

	(ii)
	Now, consider $a=-3,\; b=5 \; \text{and} \; \tau=0.25$.\\
	Since\; $ b>-a,\;(a,b)\in$ SSR.
	Furthermore, $\alpha>\alpha^*=0.2644385$, thus, based on theorem \ref{Main Theorem}, we find the intersection point and get
	$A_1(0.4)=-1.1541$ (see data set 1).
	Now, $A=a\tau^\alpha= (-3){(0.25)}^{0.4}=-1.723<A_1.$
	So, $\tau^*=\tau^*(\Gamma_2)=0.271611$ (see  data set 2).\\
	Since $\tau<\tau^*$, we get the stable solution shown in Figure \ref{Fig. 8}.
	\begin{figure}[H]
		\centering
		\includegraphics[scale=0.8]{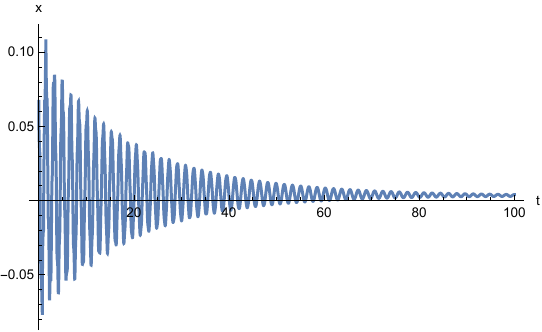}  
		\caption{{$\alpha=0.4, a=-3,b=5, \tau=0.25$, stable solution}}
		\label{Fig. 8}
	\end{figure}
	Take the same values of $a$ and $b$ with $\tau=0.28$.
	In this case, $A=(-3){(0.28)}^{0.4}=-1.80295<A_1.$ 
	So, $\tau^*=\tau^*(\Gamma_2)=0.271611$ (see  data set 2).\\
	Since $\tau>\tau^*$, the system is unstable. We get a root $\beta=0.00287472 + 5.02187i$ with positive real part of the modified characteristic equation (\ref{mce}) supporting this claim.
	\vspace{0.1cm}	
	
	(iii)
	Take $a=-5,\; b=3\; \text{and} \; \tau=0.8$.\\
	Since\;\; $ b<-a.$
	So, $(a,b)\in S.$\\
	Hence, we get the stable solution shown in Figure (\ref{Fig. 9}). 
	\begin{figure}[H]
		\centering
		\includegraphics[scale=0.6]{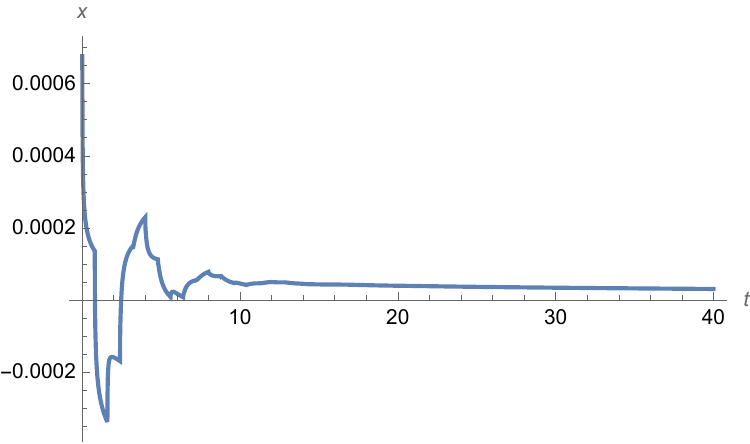} 
		\caption{{$\alpha=0.4, a=-5,b=3, \tau=0.8$, stable solution}}
		\label{Fig. 9}
	\end{figure}      
	\vspace{0.1cm}		
	
	(iv) 
	Now, consider $a=-3\;, b=-4$ and $\tau=0.02$.\\
	Since\;\; $ b<a/2$. So, $(a,b)\in$ SSR.
	In this case, $\tau^*=\tau^*(\Gamma_3)=0.0443753$ (see data set 2). \\
	Hence, the system is stable for $\tau=0.02<\tau^*$. We sketched the stable solutions as shown in Figure (\ref{Fig. 10}).
	\begin{figure}[H]
		\centering
		\includegraphics[scale=0.6]{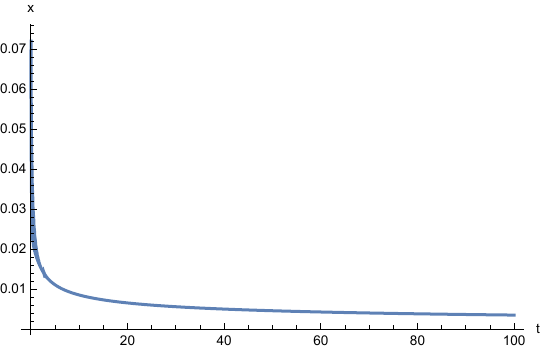} 
		\caption{{$\alpha=0.4, a=-3,b=-4, \tau=0.02$, stable solution}}
		\label{Fig. 10}
	\end{figure}
	On the other hand, if $\tau=0.05>\tau^*$, then the system is unstable (see Figure \ref{Fig. 11}).
	\begin{figure}[H]
		\centering
		\includegraphics[scale=0.6]{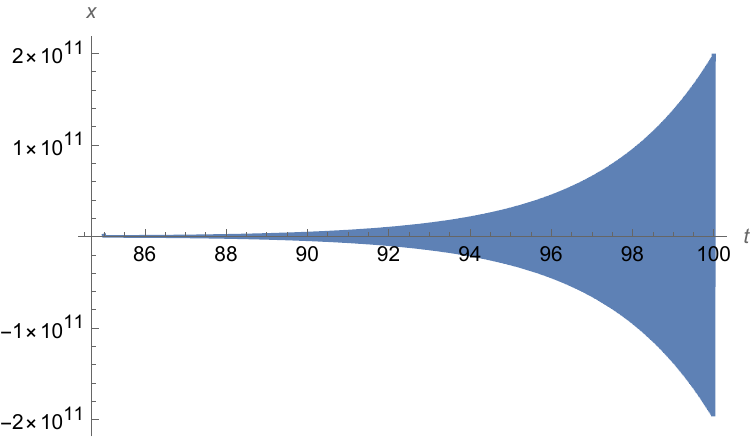}  
		\caption{{$\alpha=0.4, a=-3,b=-4, \tau=0.05$, unstable solution}}
		\label{Fig. 11}
	\end{figure}
	
	\newpage
	\textbf{Example 2}  Consider the FDDE (\ref{eq.1}) with $\alpha=0.8$. In this case, the stability analysis is provided by Figure \ref{fig:ab2}.

	(i)\;\; If we consider $a=5,\; b=-8 \; \text{and}\; \tau=1.5.$
	Then,  $a>0 \;\;and \;\;b<0 $.
	So, $(a,b)\in U$ region.\\
	Hence, the system is unstable for any $\tau$. We get an unstable solution shown in Figure \ref{Fig. 13}.
	\begin{figure}[H]
		\centering
		\includegraphics[scale=0.6]{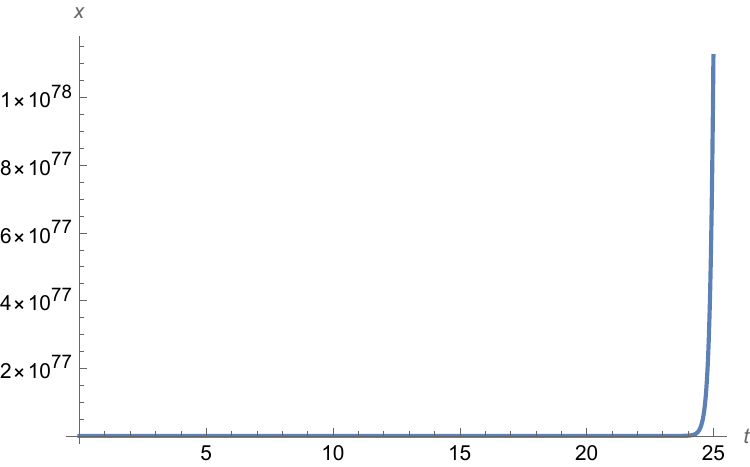}
		\caption{{$\alpha=0.8, a=5,b=-8, \tau=1.5$, unstable solution}} 
		\label{Fig. 13}
	\end{figure}
	
	(ii)
	Now, take $a=-5,\; b=8\; \text{and} \; \tau=0.25$.\\
	Here $ b>-a$, so, $(a,b)$ lies in the SSR region.
	Moreover, $\alpha>\alpha^*=0.2644385$, so following Theorem 4.1, we determine the intersection point and get $A_1(0.8)=-2.42855$ (see data set 1).
	Now, $A=a\tau^\alpha= (-5){(0.25)}^{0.8}=-1.64938>A_1.$
	So, $\tau^*=\tau^*(\Gamma_1)=0.264173$ (see  data set 2).\\
	Since $\tau<\tau^{*}$, the system will be stable (cf. Figure \ref{Fig. 14}).
	\begin{figure}[H]
		\centering
		\includegraphics[scale=0.7]{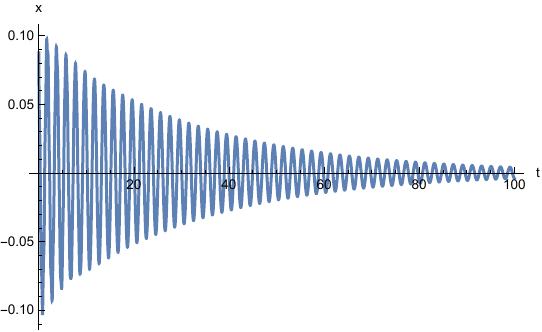}  
		\caption{{$\alpha=0.8, a=-5,b=8, \tau=0.25$, stable solution}}
		\label{Fig. 14}
	\end{figure}
	Take the same values of $a$ and $b$ with $\tau=0.28$.
	In this case, $A=(-5){(0.28)}^{0.8}=-1.80591> A_1.$ 
	So, $ \tau^*=\tau^*(\Gamma_1)=0.264173 $ (see  data set 2).\\
	Since $ \tau>\tau^* $, the system is unstable. We get an unstable solution as shown in Figure \ref{Fig. 15}.
	\begin{figure}[H]
		\centering
		\includegraphics[scale=0.7]{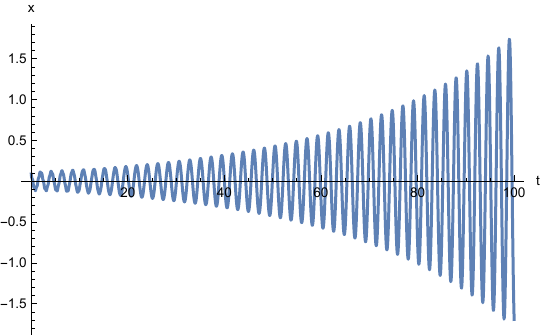} 
		\caption{{$\alpha=0.8, a=-5,b=8, \tau=0.28$, unstable solution}}
		\label{Fig. 15}
	\end{figure}

	(iii)
	If we assume $a=-7,\;b=4 \;\text{and}\; \tau=1.4$.\\
	Here $b/a=-0.71$ and equation of $T_1$ for $\alpha=0.8$ is $b=-0.9799a.$ Clearly, the line representing the given value of (a,b) will lie below $T_1$, i.e., in the stability region.\\	

	Hence, the system is stable for any $\tau$. Stable solutions are sketched in Figure \ref{Fig. 16}.
	\begin{figure}[H]
		\centering
		\includegraphics[scale=0.6]{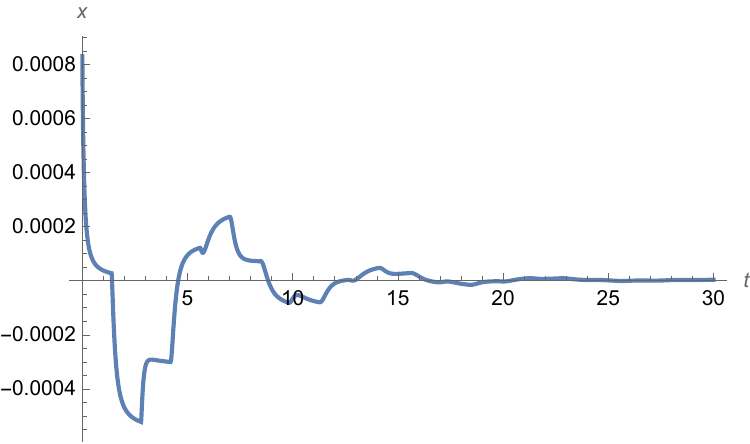}  
		\caption{{$\alpha=0.8, a=-7,b=4, \tau=1.4$, stable solution}}
		\label{Fig. 16}
	\end{figure}  
	
	\vspace{0.1cm}
	
	(iv)
	Now, we consider $a=-8,\;b=-6 \;\text{and}\; \tau=0.24$.\\
	Since\;\; $ b<a/2.$	So, $(a,b)\in SSR.$
	In this case, $\tau^*=\tau^*(\Gamma_3)=0.264942$ \\
	Hence, the system is stable for $\tau=0.24<\tau^*$ (cf. Figure \ref{Fig. 17}).
	\begin{figure}[H]
		\centering
		\includegraphics[scale=0.6]{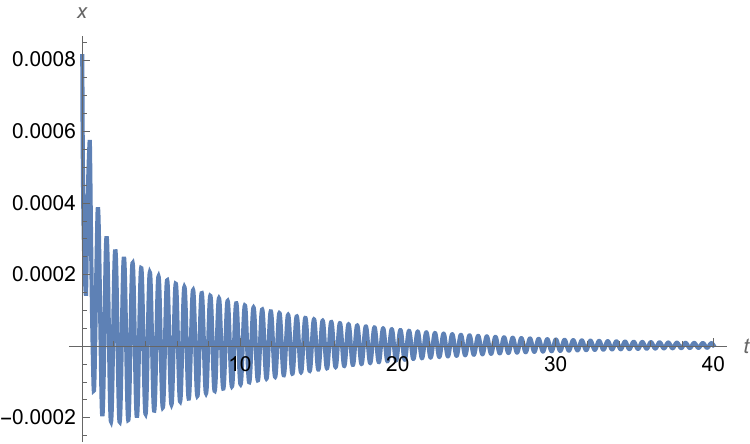}  
		\caption{{$\alpha=0.8, a=-8,b=-6, \tau=0.24$, stable solution}}
		\label{Fig. 17}
	\end{figure}

	If we consider the same values for $a$ and $b$ and $\tau=0.3$.\\ Then, since $\tau>\tau^*$. The solution trajectory shows unbounded oscillations as shown in Figure \ref{Fig. 18}.
	
	\begin{figure}[H]
		\centering
		\includegraphics[scale=0.6]{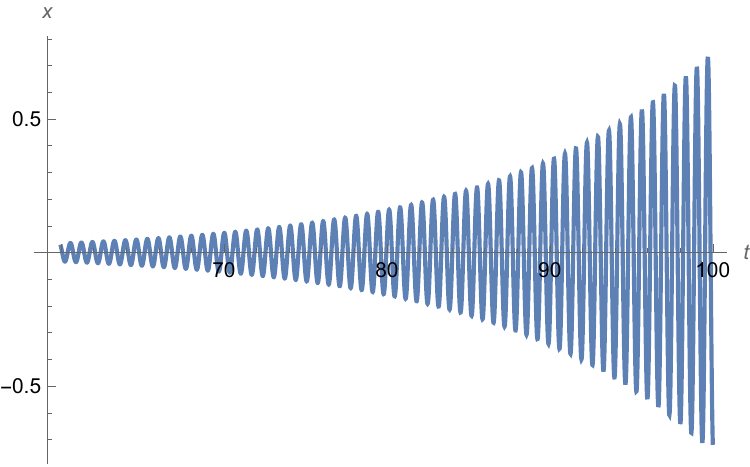}  
		\caption{{$\alpha=0.8, a=-8,b=-6, \tau=0.3$, unstable solution}}
		\label{Fig. 18}
	\end{figure}  
	
	(v)
	If we take $a=-30,\; b=29.4\; \text{and}\; \tau=0.2$.\\
	Here $b/a=-0.98$, equation of $T_1$ is $b=-0.9799a$ and equation of $T_2$ is $b=-a$. Hence, the line representing the given value of (a,b) will lie between $T_1$ and $T_2$ i.e., in the region of the stability switch.\\
	So, by putting values of $\alpha, a,b $, we will get both the critical values of this region.\\
	$\tau_1^*=0.576569$ and $\tau_2^*=0.693395$.
	
	So, for $\tau=0.2<\tau_1^*$, system is stable. Hence, the solution goes to zero as we increase t (see Figure \ref{Fig. 19}).
	\begin{figure}[H]
		\centering
		\includegraphics[scale=0.6]{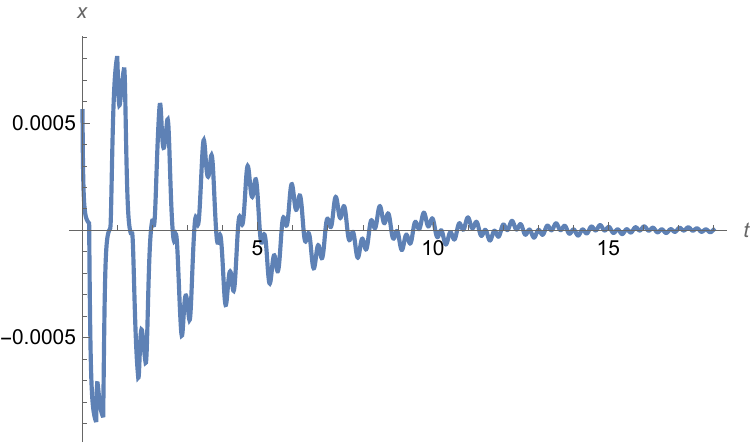}  
		\caption{{$\alpha=0.8, a=-30,b=29.4, \tau=0.2$, stable solution}}
		\label{Fig. 19}
	\end{figure}     
	
	Now, if $\tau=0.6$ i.e $\tau_1^*<\tau<\tau_2^*$, then system will be unstable.
	We observed that there is a positive root $\beta=0.614969$ of the modified characteristic equation (\ref{mce}) validating this assertion.\\
	
	Also, If we take $a=-30,\; b=29.4\; \text{and} \;\tau=0.8>\tau_2^*$, then the system is again stable(see Figure \ref{Fig. 21}).
	\begin{figure}[H]
		\centering
		\includegraphics[scale=0.8]{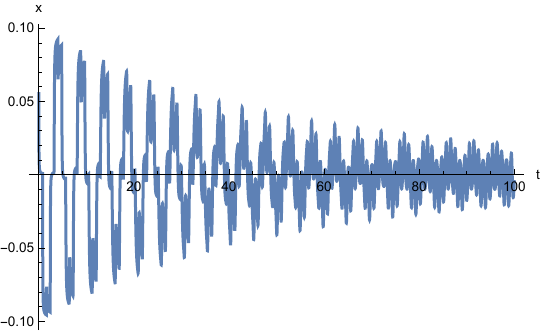}  
		\caption{{$\alpha=0.8, a=-30,b=29.4, \tau=0.8$, stable solution}}
		\label{Fig. 21}
	\end{figure} 
	\section{Conclusions}
	\label{con}
	We considered the fractional order delay differential equation $D^\alpha x(t)=a x(t)+b x(t-\tau)-b x(t-2\tau)$. We reduced the number of parameters by transforming $A=a\tau^\alpha$,  $B=b\tau^\alpha$. The boundaries of the stable region in the $AB-$ plane are obtained by setting the eigenvalue as a purely imaginary value. This provided stable and unstable regions. Furthermore, we translated these regions to $ab-$plane. This generated a few more delay-dependent regions, viz. single stable region (SSR), where the system is stable for smaller values of delay and becomes unstable for the larger ones, and stability switch (SS), where we get the intermittent stability behavior as the delay changes. We provided an ample number of examples to support our results. We hope that this work will be an essential step to solve the open problem on the stability analysis of $D^\alpha x(t)=a x(t)+b x(t-\tau_1)+c x(t-\tau_2)$. 
	\section*{Acknowledgment}	
	Pragati Dutta thanks the University of Hyderabad for the non-net fellowship.

	\newpage
	\bibliographystyle{unsrt}	
	\bibliography{ref.bib}
	
\end{document}